\newtoks\prt
\numberwithin{equation}{section}
\newtheorem{thm}{Theorem}[section]
\newtheorem{lemma}[thm]{Lemma}
\theoremstyle{definition}
\newtheorem{remark}[thm]{Remark}
\newtheorem{definition}[thm]{Definition}
\def\eqn#1$$#2$${\begin{equation}\label#1#2\end{equation}}
\def\A{\mathcal A}
\def\C{\mathcal C}
\def\E{\mathcal E}
\def\e{e^\ast}
\def\s{u^\ast}
\def\t{v^\ast}
\def\E{E^\ast}
\def\H{\mathcal H}
\def\M{\mathcal M}
\def\ep{\varepsilon}
\def\en{\mathbb N}
\def\er{\mathbb R}
\def\ov{\overline}
\def \Ch {\operatorname{Ch}}
\def \ext {\operatorname{ext}}
\def\wh{\widehat}
\def \reg {\partial _{\kern1pt\text{reg}}}
\def\la{\langle}
\def\ra{\rangle}
\newcommand{\norm}[1]{\left\|#1\right\|}
\newcommand{\abs}[1]{\left| #1  \right|}
\begin{document}

\title[An Amir-Cambern theorem for subspaces]
{An Amir-Cambern theorem for subspaces of Banach lattice-valued continuous functions}

\author{Jakub Rondo\v s and Ji\v r\'\i\ Spurn\'y}

\address{Charles University\\
Faculty of Mathematics and Physics\\
Department of Mathematical Analysis \\
Sokolovsk\'{a} 83, 186 \ 75\\Praha 8, Czech Republic}

\email{jakub.rondos@gmail.com}
\email{spurny@karlin.mff.cuni.cz}

\subjclass[2010]{47B38; 46A55}

\keywords{function space; vector-valued Banach-Stone theorem; Amir-Cambern theorem; Banach lattice}

\begin{abstract}
 For $i=1,2$, let $E_i$ be a reflexive Banach lattice over $\er$ with a certain parameter $\lambda^+(E_i)>1$,
 let $K_i$ be a locally compact (Hausdorff) topological space and let $\H_i$ be a closed subspace of $\C_0(K_i, E_i)$ such that each point of the Choquet boundary $\Ch_{\H_i} K_i$ of $\H_i$ is a weak peak point. We show that if there exists an isomorphism $T\colon \H_1\to \H_2$ with $\norm{T}\cdot \norm{T^{-1}}<\min \lbrace \lambda^+(E_1), \lambda^+(E_2) \rbrace$ such that $T$ and $T^{-1}$ preserve positivity, then $\Ch_{\H_1} K_1$ is homeomorphic to $\Ch_{\H_2} K_2$.
\end{abstract}

\maketitle

%%%%%%%%%%%%%%%%%%%%%%%%%%%%%%%%%%%%%%%%%%%%%%%%%%%
\section{Introduction}
We work within the framework of real Banach spaces and real Banach lattices. If $E$ is a real Banach space then $E^*$ stands for its dual space. We denote by $B_E$ and $S_E$ the unit ball and sphere in $E$, respectively, and we write $\la\cdot,\cdot\ra\colon E^*\times E\to\er$ for the duality mapping. For a locally compact (Hausdorff) space $K$, let $\C_0(K,E)$ denote the space of all continuous $E$-valued functions vanishing at infinity. We consider this space endowed with the sup-norm
\[\norm{f}_{\sup}=\sup_{x \in K} \norm{f(x)}, \quad f \in \C_0(K,E).\]
If $K$ is compact, then this space will be denoted by $\C(K,E)$. For a compact space $K$, we identify the dual space $(\C(K,E))^*$ with the space $\M(K,\E)$ of all $\E$-valued Radon measures on $K$ endowed with the variation norm via Singer's theorem (see \cite[p. 192]{Singer}). Thus $\M(K, \er)$ is the usual space of (signed) Radon measures on $K$. Unless otherwise stated, we consider $\M(K,\E)$ endowed with the weak$^*$ topology given by this duality.

The well-known Banach-Stone theorem asserts that, given a pair of compact spaces $K$ and $L$, they are homeomorphic provided $\C(K,\er)$ is isometric to $\C(L,\er)$ (see \cite[Theorem 3.117]{faspol}).

A nice generalization of this theorem was given independently by Amir \cite{amir} and Cambern \cite{cambern}. They showed that compact spaces $K$ and $L$ are homeomorphic if there exists an isomorphism $T\colon \C(K,\er)\to\C(L,\er)$ with $\norm{T}\cdot \norm{T^{-1}}<2$. Alternative proofs were given by Cohen \cite{cohen} and Drewnowski \cite{drewnow}.

In a recent extension of the theorem of Amir and Cambern to the context of vector-valued functions  \cite{cidralgalegovillamizar}, it was showed that if $E$ is a real or complex reflexive Banach space with $\lambda(E)>1$, then for all locally compact spaces $K_1, K_2$, the existence of an isomorphism $T:\C_0(K_1, E) \rightarrow \C_0(K_2, E)$ with $\norm{T}\cdot\norm{T^{-1}}<\lambda(E)$ implies that the spaces $K_1, K_2$ are homeomorphic. Here

\[\lambda(E)=\inf \lbrace \max \lbrace \Vert e_1+\lambda e_2 \Vert: \lambda \in \er, \abs{\lambda}=1 \rbrace: e_1, e_2 \in S_E \rbrace\]
is a parameter introduced by Jarosz in \cite{jarosz-pacific}.

Also, in \cite{cidralgalegovillamizar} the authors proved that the constant $\lambda(E)$ is the best possible for $E=l_p$, where $2 \leq p <\infty$.

In \cite{Villamizar_svazy} it was shown how the constant $\lambda(E)$ can be improved under aditional assumptions on the isomorphism $T$. More specifically, the authors assume that $E$ is a real Banach lattice and $T:\C_0(K_1, E) \rightarrow \C_0(K_2, E)$ is a Banach lattice isomorphism. The constant $\lambda(E)$ may be then replaced by 
\[\lambda^+(E)=\inf \{\max \{\norm{e_1+e_2}, \norm{e_1-e_2}\}, e_1, e_2 \in S_{E}, e_1, e_2 \geq 0\}.\]
It is easily seen that $\lambda(E) \leq \lambda^+(E)$ for each Banach lattice $E$, and in \cite{Villamizar_svazy} it is shown that for $E=\ell_p$, where $1 \leq p <2$, the inequality is strict, and moreover, that the constant $\lambda^+(E)$ is the best possible for classical spaces $E=\ell_p$, where $p \in [1, \infty)$. Also, from the definition it follows that $1 \leq \lambda^+(E) \leq 2$ for each Banach space $E$.

Our research aims to extend isomorphic Banach-Stone type theorems to the context of subspaces of continuous functions, whose Choquet boundaries consist of weak peak points. Those two notions will be described in the next section.

In \cite{rondos-spurny-spaces}, we were able to extend the theorem of Amir and Cambern by showing that for closed subspaces $\H_i \subset \C_0(K_i,\er)$ for $i=1,2$, their Choquet boundaries are homeomorphic provided points in the Choquet boundaries are weak peak points and there exists an isomorphism $T\colon \H_1\to \H_2$ with $\norm{T}\cdot \norm{T^{-1}}<2$. Before that, there were proved analougous results for spaces of affine real continuous functions on compact convex sets (\cite{chuco}, \cite{lusppams} and \cite{dosp}), and spaces of affine complex continuous functions \cite{rondos-spurny}.

In \cite{rondos-spurny-vector-spaces}, we generalized the results of \cite{cidralgalegovillamizar} by showing that if for $i=1,2$, $E_i$ is a reflexive real or complex Banach space and $\H_i$ is a closed linear subspace of $\C_0(K_i, E_i)$, such that each point of $\Ch_{\H_i} K_i$ is a weak peak point, then if $\H_1$ is isomorphic to $\H_2$ by an isomorphism $T$ satisfying $\norm{T}\norm{T^{-1}}<\min\lbrace \lambda(E_1), \lambda(E_2) \rbrace$, then the Choquet boundaries $\Ch_{\H_1} K_1$ and $\Ch_{\H_2} K_2$ are homeomorphic.

The aim of this paper is to present an analogue of the result of \cite{Villamizar_svazy} in the context of subspaces. However, we are able to extend this result to this general setting only in the case where the Banach lattices are reflexive, which is not needed in \cite{Villamizar_svazy}. The reason for this is that meanwhile operators from $\C(K, E_1)$ spaces to $E_2$ may be represented by Borel measures on $K$ with values in $L(E_1, E_2^{**})$, the space of bounded linear operators from $E_1$ to $E_2^{**}$ (see Remark \ref{remark}), such a representation is not available for operators from $\H_1 \subset \C(K, E_1)$ to $E_2$. On the other hand, we are able to replace the assumption that the isomorphism $T$ is a Banach lattice isomorphism by the weaker condition that $T$ and $T^{-1}$ preserve positive elements.

More specifically, our main result is the following theorem.

\begin{thm}
	\label{main}
	For $i=1,2$, let $\H_i$ be a closed subspace of $\C_0(K_i, E_i)$ for some locally compact space $K_i$ and a real reflexive Banach lattice $E_i$ satisfying $\lambda^+(E_i)>1$. Assume that each point of the Choquet boundary $\Ch_{\H_i} K_i$ of $\H_i$ is a weak peak point and let $T\colon \H_1\to \H_2$ be an isomorphism satisfying \[\norm{T}\cdot\norm{T^{-1}}<\min \lbrace \lambda^+(E_1), \lambda^+(E_2) \rbrace\] such that $T$ and $T^{-1}$ preserve positive elements, that is, 
	\[ f \in \H_1^+ \quad \text{if and only if} \quad T(f) \in \H_2^+.\]
	Then $\Ch_{\H_1} K_1$ is homeomorphic to $\Ch_{\H_2} K_2$.
\end{thm}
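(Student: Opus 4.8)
The plan is to reduce the problem to the scalar-valued result of \cite{rondos-spurny-spaces} by first establishing a Banach--Stone type correspondence between the Choquet boundaries at the level of evaluation functionals, and then checking that this correspondence is a homeomorphism. The starting point is that each evaluation map $x\mapsto \delta_x\otimes e^*$ (for $x\in\Ch_{\H_i}K_i$ and $e^*\in S_{E_i^*}$) gives a functional on $\H_i$; since $T^*$ carries $(\H_2)^*$ onto $(\H_1)^*$ isomorphically, one studies how $T^*$ acts on these elementary functionals. For a fixed $y\in\Ch_{\H_2}K_2$, one analyzes the measure-like representation of $T^*(\delta_y\otimes e^*)\in(\H_1)^*$: because the points of $\Ch_{\H_1}K_1$ are weak peak points, the norm of such a functional can be localized, and the hypothesis $\norm{T}\norm{T^{-1}}<\lambda^+(E_1)$ forces the representing functional to be essentially concentrated at a single point $x=\varphi(y)\in\Ch_{\H_1}K_1$. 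This is the vector-valued analogue of the classical Amir--Cambern argument: if the functional had mass spread over two distinct boundary points, one could produce two normalized elements of $\H_1$ witnessing a contradiction with the definition of $\lambda^+(E_1)$. The positivity hypothesis on $T$ and $T^{-1}$ is what allows us to run this argument with the \emph{lattice} constant $\lambda^+(E_i)$ in place of $\lambda(E_i)$: the competing elements $e_1,e_2\in S_{E_1}$ that arise can be taken positive, so it is $\lambda^+(E_1)$ rather than $\lambda(E_1)$ that controls the estimate.

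Concretely, I would proceed as follows. First, fix notation and recall (from the earlier sections, to be cited) the characterization of weak peak points and the fact that for $x$ in the Choquet boundary the evaluation functionals behave like point masses; record the elementary inequalities relating $\norm{T}$, $\norm{T^{-1}}$, and the constants $\lambda^+(E_i)$. Second, for each $y\in\Ch_{\H_2}K_2$ and each $e^*\in S_{E_2^*}$, show that $T^*(\delta_y\otimes e^*)$, restricted to a suitable neighbourhood behaviour, is carried by a single point $\varphi(y)\in\Ch_{\H_1}K_1$ together with a bounded operator $S_y\in L(E_2^*,E_1^*)$ (here reflexivity of $E_i$ is used to pass freely between $E_i$ and $E_i^{**}$ and to guarantee that the relevant representing objects are operators rather than merely finitely additive gadgets — this is exactly the point flagged in the paragraph before the theorem). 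Third, by symmetry the same construction applied to $T^{-1}$ yields $\psi\colon\Ch_{\H_1}K_1\to\Ch_{\H_2}K_2$, and one shows $\psi=\varphi^{-1}$ by composing the two representations and using that the product of norms is strictly less than $\min\{\lambda^+(E_1),\lambda^+(E_2)\}$, which rules out the map $\varphi$ "collapsing" or "splitting" points. Fourth, verify continuity of $\varphi$ (and hence, with its inverse, that it is a homeomorphism) by a standard net argument: if $y_\alpha\to y$ but $\varphi(y_\alpha)\not\to\varphi(y)$, pass to a subnet converging to some $x'\neq\varphi(y)$, test against elements of $\H_1$ peaking near $\varphi(y)$ and near $x'$, and derive a contradiction with the norm bound exactly as in the localization step.

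The main obstacle, as the authors themselves signal, is the third step, and more precisely the absence of an operator-valued Riesz representation for functionals on the \emph{subspace} $\H_1$: one cannot simply write $T^*(\delta_y\otimes e^*)$ as $\int_{K_1}\,d\mu$ with $\mu$ an $L(E_1,E_2^{**})$-valued Borel measure, the way one would for the full space $\C(K_1,E_1)$. The remedy is to work with Hahn--Banach extensions to $\C_0(K_1,E_1)$, obtain a representing measure there, and then prove that the weak-peak-point hypothesis forces the \emph{boundary part} of that measure to live on $\Ch_{\H_1}K_1$ and to be independent of the chosen extension — so that $\varphi$ and $S_y$ are well defined on $\H_1$ itself. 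Handling the interaction of this non-uniqueness with the lattice structure (ensuring the positive elements used in the $\lambda^+$-estimate genuinely lie in $\H_1^+$, and that $T$ and $T^{-1}$ being positive transfers cleanly to positivity of the associated operators $S_y$) is where the real work lies; the reflexivity of $E_1$ and $E_2$ is precisely the hypothesis that makes this transfer possible, since it collapses the distinction between $E_i$, $E_i^{**}$, and their duals and keeps all the representing objects honest bounded operators between the lattices.
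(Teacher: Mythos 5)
Your strategy is essentially the paper's own: localize the Hahn--Banach extension of $T^*(\phi_2(y,v^*))$ at a single weak peak point of $\Ch_{\H_1}K_1$ via peaking functions and the $\lambda^+$-estimate applied to positive images under $T$, use reflexivity to keep the resulting atom in $E_1$ rather than $E_1^{**}$, and finish with the symmetric construction for $T^{-1}$ plus a net argument for continuity. The one ingredient you leave implicit is the maximum principle for first-Borel-class positive elements of $\H_i^{**}$, which is what actually justifies computing norms as suprema over $\Ch_{\H_i}K_i\times S_{(E_i^*)^+}$ and hence the surjectivity and mutual-inverse steps; this is precisely the machinery the paper supplies in its norm-formula lemma.
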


The proof of this theorem will be given in Section 5.

\section{Definitions and notation}

If $E$ is a real Banach lattice, then we write $E^+$ for the set of all positive elements of $E$. Then $E^*$ is also a Banach lattice with the ordering given by $e^* \geq 0$ in $E^*$ if and only if $\la e^*, e \ra \geq 0$ for each $e \in E^+$. It is well-known that if $e^*$ is a positive element of $\E$, then the norm of $\e$ is determined by its values on positive elements, that is, 
\[\norm{\e}=\sup_{e \in B_{E^+}} \abs{\la \e, e \ra},\]

see e.g. \cite[Theorem 4.1 and the subsequent equalities]{Aliprantis2006}.

If $K$ is a locally compact (Hausdorff) space, then the space $\C_0(K, E)$ is a Banach lattice with the natural pointwise ordering. If $\H$ is a linear subspace of $\C_0(K, E)$ (which is generally not a sublattice of $\C_0(K, E)$), then we say that a function $f \in \H$ is positive if 
$f(x) \in E^+$ for each $x \in K$, and we write $\H^+$ for the set of positive elements of $\H$. Also, we can naturally consider positivity on  $\H^*$ by saying that $s \in \H^*$ is positive (and we write $s \in (\H^*)^+$), if $s(h) \geq 0$ for each $h \in \H^+$. We also consider positivity on $\H^{**}$ in the same obvious way.

We will from now on tacitly assume that the dimension of both the spaces $E$ and $\H$ is at least 1. If $\H$ or $E$ has the dimension zero then the assumptions of our main results are never satisfied.

For $h \in \H$ and $\e \in \E$, $\e(h)$ is the element of $\C_0(K, \er)$ defined by $\e(h)(x)=\la \e, h(x) \ra$ for $x \in K$. As in \cite{rondos-spurny-vector-spaces}, we define the canonical scalar function space $\A \subset \C_0(K, \er)$ associated to $\H$ as the closed linear span of the set 
\[ \lbrace \e(h): \e \in \E, h \in \H \rbrace  \subset \C_0(K, \er).\]
Since both the spaces $\H$ and $E$ are of dimension at least 1 by the assumption, it follows that the dimension of $\A$ is at least 1 as well. 

The sets $B_{\E}$, $B_{\H^*}$ and $B_{\A^*}$ will be always equipped with the $w^*$-topology, unless otherwise stated.
We consider evaluation mappings $i$, $\phi$ defined as 
\[ i\colon K \to B_{\A^*}, x \mapsto i(x), \quad 
\phi\colon K \times \E \to \H^*, (x, \e) \mapsto \phi(x, \e), \]
where 
\[ \la i(x), a \ra=a(x), \quad a \in \A, \quad \text{and} \quad
\la \phi(x, \e), h \ra=\la \e, h(x) \ra=\e(h)(x), \quad h \in \H.\]
The mappings $i$ and $\phi$ are continuous, if on $\E$ we consider the $w^*$-topology. Moreover, it follows easily from the definition that $\phi$ is linear with respect to $\E$, $\phi(K \times B_{\E}) \subset B_{\H^*}$ and $\phi(K \times (E^*)^+) \subset (\H^*)^+$.

We define the \emph{Choquet boundary} $\Ch_{\H} K$ of $\H$ as the Choquet boundary of $\A$, that is, $\Ch_{\H} K$ is the set of those points $x \in K$ such that $i(x)$ is an extreme point of $B_{\A^{*}}$.

Next, for a function $f \colon K \rightarrow \er$ and $e \in E$, the function $f \otimes e \colon K \rightarrow E$ is defined by
\[ (f \otimes e)(x)=f(x)e, \quad x \in K .\]

\begin{definition}
	Let $\H$ be a closed subspace of $\C_0(K, E)$. We say that a point $x \in \Ch_{\H} K$ is a \emph{weak peak point}, if for each neighbourhood $U$ of $x$, $\ep \in (0, 1)$ and $e \in E^+$ there exists a function $h_{U, \ep} \in \A$, the canonical scalar function space of $\H$, such that $h_{U, \ep}(x)>1-\ep$, $h<\ep$ on $\Ch_{\H} K \setminus U$, $0 \leq h_{U, \ep} \leq 1$ on $K$ and $h_{U, \ep} \otimes e \in \H$.
\end{definition}

Note that the above definition of a weak peak point differs slightly from the one in \cite{rondos-spurny-vector-spaces}. However, if $K$ is compact and the space $\H$ contains a nonzero constant function, then so does its canonical scalar function space $\A$, and if $h \in \A$ is a peaking function in the sense of \cite{rondos-spurny-vector-spaces}, then $\frac{h+\ep}{1+\ep} \in \A$ is a peaking function in the sense of this paper. Thus in the case when $\H$ contains a nonzero constant function, the assumption on weak peak points that we use here is weaker that the one in \cite{rondos-spurny-vector-spaces}, as here we consider only positive elements of $E$.

The reason for this modification is that we need the peaking function $h$ to satisfy that $h \otimes u \in \H$ is positive whenever $u$ is positive in $E$. It readily follows that the conclusions of Lemmas 2.6, 2.7 and 2.10 in \cite{rondos-spurny-vector-spaces} about weak peak points, which we are going to use here as well, remain true with this slightly modified definition.

\begin{definition}
	Let $\H$ be a closed subspace of $\C_0(K, E)$. We consider an ordering $\prec$ on the pairs $(U, \ep)$, where $U$ is a nonempty closed set and $\ep>0$ by $(U_1, \ep_1)\prec (U_2, \ep_2)$ if $U_2 \subset U_1$ and $\ep_2 \leq \ep_1$. 
	
	If $x \in \Ch_{\H} K$ is a weak peak point and $e \in S_{E^+}$, then we define the \emph{net of peaking functions} for the pair $(x, e)$ as the system
	\[ \{h_{U, \ep}: U \text{ is a closed neighbourhood of }x \text{ and } \ep \in (0,1) \},  \] 
	where $h_{U, \ep}$ is a function in $\A$, the canonical scalar function space of $\H$, and satisfies that $h_{U, \ep}(x)>1-\ep$, $h_{U, \ep}<\ep$ on $\Ch_{\H} K \setminus U$, $0 \leq h_{U, \ep} \leq 1$ on $K$ and $h_{U, \ep} \otimes e \in \H$, endowed with the ordering $\prec$. Note that since $U$ is closed and $h_{U, \ep}$ is continuous, $h_{U, \ep} \leq \ep$ on the set
	\[\ov{\Ch_{\H} K}\setminus U \subset \ov{\Ch_{\H} K \setminus U}.\] 
\end{definition}

\section{Auxiliary results for the case of compact spaces}

In this section we assume that $K$ is a compact space and $E$ is a real Banach lattice, but mostly we actually use just the Banach space structure of $E$, except of the part concerning positive elements. We collect some well known facts about the spaces of $\C(K, E)$ and $\M(K, \E)$, as well as some of the auxiliary results for subspaces  $\H \subset \C(K, E)$ that we obtained in \cite{rondos-spurny-vector-spaces}, and that are necessary for the proof of our main result. The reduction from the case of locally compact spaces to the case of compact ones in the proof of Theorem \ref{main} will be possible due to \cite[Lemma 2.10]{rondos-spurny-vector-spaces}. 

To begin with, by \cite[Lemma 2.1]{rondos-spurny-vector-spaces} we know that
\begin{equation}
\label{ext}
\ext B_{\H^*}\subset \phi(\Ch_{\H} K \times \ext B_{\E}).
\end{equation}
Further, it was proved in \cite[Lemma 2.2]{rondos-spurny-vector-spaces} that for any $s\in\H^*$ there exists a vector measure $\mu\in\M(\ov{\Ch_{\H} K}, \E)$ such that $\mu=s$ on $\H$ and $\norm{\mu}=\norm{s}$.

Next we recall that, given a pair of topological spaces $M,L$, a function $f\colon M\to L$ is \emph{of the first Borel class} if $f^{-1}(U)$ is a countable union od differences of closed sets in $M$ for any $U\subset L$ open (see \cite{spurny-amh} or \cite[Definition~5.13]{lmns}). The following maximum principle is what makes this class of functions so important for us. If $f\colon X\to \er$ is a bounded affine function of the first Borel class on a compact convex set $X$, then
\[
\sup_{x\in X} \abs{f(x)}=\sup_{x\in \ext X}\abs{f(x)},
\]
see \cite[Corollary~1.5]{dosp} and \cite[Theorem~2.3]{koumou}. 

Consequently, if $F^{**} \in B_{\H^{**}}$ is a linear functional that is of the first Borel class on the compact convex set $B_{\H^*}$, then we have 
\begin{equation}
\nonumber
\begin{aligned}
&\Vert F^{**} \Vert=
\sup_{s \in \ext B_{\H^*}} \abs{\langle F^{**}, s \rangle} =^{\eqref{ext}} \sup_{x \in \Ch_{\H} K, \e \in S_{\E}} \abs{\langle F^{**}, \phi(x, \e) \rangle}.
\end{aligned}
\end{equation}

Let us now moreover suppose that $F^{**}$ is a positive element of $\H^{**}$. For a given $x \in K$, using the linearity of the evaluation mapping $\phi$ with respect to $\E$ one may define an element $F^{**}(x) \in E^{**}$ by the formula 
\[\la F^{**}(x), \e \ra=\la F^{**}, \phi(x, \e) \ra, \quad \e \in \E.\] 
We claim that this element is positive in $E^{**}$. Indeed, for a given $\e \in (\E)^+$, we know that $\phi(x, \e)$ is positive in $\H^*$, and thus
\begin{equation}
\label{positiveness}
\la F^{**}(x), \e \ra=\la F^{**}, \phi(x, \e) \ra \geq 0
\end{equation}
by the assumption. Thus using the previous equality we obtain
\begin{equation}
\label{max}
\begin{aligned}
&\Vert F^{**} \Vert=
 \sup_{x \in \Ch_{\H} K, \e \in S_{\E}} \abs{\langle F^{**}, \phi(x, \e) \rangle}= \sup_{x \in \Ch_{\H} K, \e \in S_{\E}} \abs{\langle F^{**}(x), \e \rangle}=\\&=
 \sup_{x \in \Ch_{\H} K, \e \in S_{(\E)^+}} \abs{\langle F^{**}(x), \e \rangle}=\sup_{x \in \Ch_{\H} K, \e \in S_{(\E)^+}} \abs{\langle F^{**}, \phi(x, \e) \rangle}.
\end{aligned}
\end{equation}

Next, if $f \in \C(K, \er)$ and $e \in E$, then $f \otimes e \in \C(K, E)$ with $\norm{f \otimes e}=\norm{f}\norm{e}$, and it follows from the form of duality between $\M(K, \E)$ and $\C(K, E)^*$ (see \cite[pages 192 and 193]{Singer}) that 
\begin{equation}
\label{application}
\langle \mu, f \otimes e \rangle=\la \mu, e \ra(f), \quad \mu \in \M(K, \E),
\end{equation}
where $\langle \mu, e \rangle \in M(K)$ is defined by 
\[ \la \mu, e \ra(A)=\la \mu(A), e \ra, \quad A \subset K \text{ Borel}.\]
Also if $f \colon K \rightarrow \er$ is a bounded Borel function, then for a vector measure $\mu \in \M(K, \E)$ and $e \in E$ we consider the application $\la \mu, f \otimes e \ra$ of $\mu$ on $f \otimes e$ given by \eqref{application}.

Further, if $\mu \in \M(K, \er)$ and $\e \in \E$, then the vector measure $\e \mu \in \M(K, \E)$ is defined by
\[  \la \e \mu, h \ra= \mu(\e(h)), \quad h \in \C(K, E).\]

If $f \colon K \rightarrow \er$ is a bounded Borel function, $\mu \in \M(K, \er)$, $\e \in \E$ and $e \in E$, then it holds that
\begin{equation}
\label{aplikace}
\la \e \mu, f \otimes e \ra=\la \e, e \ra \mu(f),
\end{equation}
see \cite[(2.2)]{rondos-spurny-vector-spaces}.

Also note that if $x \in K$, then each $\mu \in \M(K, \E)$ can be uniquely decomposed as $\mu=\psi \ep_x+\nu$, where $\psi \in \E$ and $\nu \in \M(K, \E)$ with $\nu(\{x\})=0$. Indeed, it is enough to denote $\psi=\mu(\{x\})$ and $\nu=\mu|_{K \setminus \{x\}}$, and then 
\[\mu=\mu|_{\{x\}}+\mu|_{K \setminus \{x\}}=\psi\ep_x+\nu.\]
The uniqueness part is easy. Whenever we write a vector measure $\mu \in \M(K, \E)$ in the form $\mu=\psi \ep_x+\nu$, then we tacitly mean that $\psi \in \E$ and $\nu(\{x\})=0$.

Next, for a bounded Borel function $f \colon K \rightarrow E$ and $e \in S_E$, the function $\wh{f} \otimes e\colon \M(K,\E  )\to \er$ is defined as
\[
(\wh{f} \otimes e)(\mu)=\la \mu, f \otimes e \ra,\quad \mu\in \M(K,\E).
\]

The following factorization was proved in \cite[Lemma 2.7]{rondos-spurny-vector-spaces}. Let $\pi\colon \M(K, \E)\to \H^*$ be the restriction mapping, let $x\in K$ be a weak peak point and $e \in S_{E^+}$. Then there exists $a_{x, e}^{**}\in \H^{**}$ such that \[\la a_{x,e}^{**}, \pi(\mu) \ra=(\wh{\chi_{\{x\}}} \otimes e)(\mu)=\mu(\chi_{\{x\}} \otimes e)\] for any measure $\mu \in \M(K, \E)$ carried by $\ov{\Ch_{\H} K}$. 
Also, if $x_1$ and $x_2$ are distinct weak peak points in $K$, $e_1, e_2 \in S_{E^+}$ and $\alpha_1, \alpha_2 \in \er$ are arbitrary, then
\begin{equation}
\label{norm}
\norm{\alpha_1 a_{x_1,e_1}^{\ast\ast}+\alpha_2 a_{x_2,e_2}^{\ast\ast}} = \max \lbrace \abs{\alpha_1}, \abs{\alpha_2} \rbrace.
\end{equation}

By \cite[Lemma 2.3 and Lemma 2.8(b)]{rondos-spurny-vector-spaces} we moreover know that for each weak peak point $x \in \Ch_{\H} K$ and $e \in S_{E^+}$, the element $a_{x, e}^{**}$ is of the first Borel class on $(rB_{\H^*}, w^*)$ for any $r>0$.

\section{Positive isomorphisms}

In this section we assume that for $i=1,2$, $\H_i$ is a closed subspace of $\C(K_i, E_i)$ for some compact space $K_i$ and a real Banach lattice $E_i$. Further we assume that each point of the Choquet boundary $\Ch_{\H_i} K_i$ of $\H_i$ is a weak peak point and let $S\colon \H_1\to \H_2$ be an isomorphism  mapping the set of positive elements of $\H_1$ into the set of positive elements of $\H_2$. Further, for $i=1,2$, let $\A_i$ be the canonical scalar function space of $\H_i$, let $\pi_i\colon \M(K_i,\E_i)\to \H_i^{*}$ be the restriction mapping and let $\phi_i\colon K_i \times \E_i \to \H_i^*$ be the evaluation mapping. Now we prove results valid in this setting which we then apply in the proof of the Theorem \ref{main} to isomorphisms $T$ and $T^{-1}$. 

We start with the fact that for each $\s \in B_{\E_1}$, $\t \in B_{\E_2}$, $x \in K_1$ and $y \in K_2$ it holds that
\begin{equation}
\label{projekce}
\pi_1(\s\ep_x)=\phi_1(x, \s)\quad\text{in } \H_1^*\quad \text{and}\quad \pi_2(\t\ep_y)=\phi_2(y, \t) \quad\text{in } \H_2^*,
\end{equation}
see \cite[(3.1)]{rondos-spurny-vector-spaces}.

Next, for each $x \in \Ch_{\H_1} K_1$ and $u \in S_{E_1^+}$, we consider the element $a_{x, u}^{**}\in \H_1^{**}$ satisfying
\[\la a_{x, u}^{**}, \pi_1(\mu) \ra=(\wh{\chi_{\{x\}}} \otimes u)(\mu)\]
 for $\mu$ carried by $\ov{\Ch_{\H_1} K_1}$, and we proceed to the following equalities. Let $s \in \H_2^*$, and suppose that $\mu \in \pi_1^{-1}(S^*(s))$ is a Hahn-Banach extension of $S^*(s)$ carried by $\ov{\Ch_{\H_1} K_1}$ written in the form $\mu=\psi \ep_{x}+\nu$. Then we have
\begin{equation}
\nonumber
\begin{aligned}
&\langle S^{\ast\ast}(a_{x,u}^{\ast\ast}), s \rangle_{\H_2^{\ast\ast},\H_2^{\ast}}=\langle a_{x,u}^{\ast\ast}, S^\ast(s) \rangle_{\H_1^{**},\H_1^{\ast}}
=\\&=
\langle a_{x,u}^{\ast\ast}, \pi_1(\mu) \rangle_{\H_1^{**},\H_1^{\ast}}=
\langle \wh{\chi_{\{x\}}} \otimes u, \mu \rangle_{\C(K_1, E_1)^{\ast\ast},\M(K_1, E_1^*)} 
=\\&=
\langle \wh{\chi_{\{x\}}} \otimes u, \psi\ep_x+\nu \rangle_{\C(K_1, E_1)^{\ast\ast},\M(K_1, E_1^*)} =^{\eqref{aplikace}}\langle \psi, u \rangle_{E_1^*, E_1}=
\langle \mu(\{x\}), u \rangle_{E_1^*, E_1}.
\end{aligned}
\end{equation}

Thus using the above notation, we have
\begin{equation}
\label{duality}
\begin{aligned}
\langle S^{\ast\ast}(a_{x,u}^{\ast\ast}), s \rangle_{\H_2^{\ast\ast},\H_2^{\ast}}=\langle \psi, u \rangle_{\E_1, E_1}=\langle \mu(\{x\}), u \rangle_{\E_1, E_1}.
\end{aligned}
\end{equation}

Moreover, for any function $h \in \A_1$ satisfying that $h \otimes u \in \H_1$ we have 
 \begin{equation}
\label{2duality}
\begin{aligned}
\la s, S(h \otimes u) \ra=
\la S^*(s), h \otimes u \ra=\mu(h \otimes u).
\end{aligned}
\end{equation}

The general strategy of the proof of Theorem \ref{main} will be very similar to the one of \cite[Theorem 1.1]{rondos-spurny-vector-spaces}, but we need to make some adjustments to control the positivity of the elements considered. In \cite{rondos-spurny-vector-spaces}, we used as an important ingredient of the proof the fact proved in \cite{camberngriem2} that if $E$ is a reflexive Banach space and $K$ is a compact space,  then the space $\C(K, E)^{**}$ is isometrically isomorphic to the space $\C(Z, E_{w})$, where $Z$ is a compact Hausdorff space depending on $K$, and $E_{w}$ denotes $E$ equipped with its weak topology. Here we use a different approach which requires less theory. To achieve this, we need the following lemma about approximation of the element $a_{x, u}^{**}$ by the net of peaking functions for the pair $(x, u) \in \Ch_{\H_1} K_1 \times S_{E_1}$.

\begin{lemma}
	\label{approx}
	Let $x \in \Ch_{\H_1}K_1, u \in S_{E_1^+}$, and let $\{h_{U, \ep}\}$ be a net of peaking functions for the pair $(x, u)$. Then the following assertions hold.
	\begin{itemize}
		\item[(i)] The net $\{h_{U, \ep} \otimes u\}$ converges weak$^*$ to $a_{x, u}^{**}$ in $\H_1^{**}$. 		
		\item[(ii)]
		The net $\{S(h_{U, \ep} \otimes u)\}$ converges weak$^*$ to $S^{**}(a_{x, u}^{**})$ in $\H_2^{**}$.
	\end{itemize}
\end{lemma}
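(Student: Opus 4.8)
The plan is to prove (i) first and then deduce (ii) as an immediate consequence of the weak$^*$-continuity of $S^{**}$. For (i), since $\{h_{U,\ep}\otimes u\}$ is a bounded net (each $h_{U,\ep}\otimes u$ has norm at most $\norm{u}=1$) in $B_{\H_1^{**}}$, which is weak$^*$-compact, it suffices to show that every weak$^*$-cluster point of the net equals $a_{x,u}^{**}$. So let $F^{**}\in B_{\H_1^{**}}$ be such a cluster point. I must check that $\la F^{**}, s\ra=\la a_{x,u}^{**}, s\ra$ for every $s\in\H_1^*$. Using the description of $a_{x,u}^{**}$ via Hahn--Banach extensions carried by $\ov{\Ch_{\H_1}K_1}$: fix $s\in\H_1^*$ and pick $\mu\in\M(\ov{\Ch_{\H_1}K_1},\E_1)$ with $\pi_1(\mu)=s$ and $\norm{\mu}=\norm{s}$ (Lemma 2.2 of \cite{rondos-spurny-vector-spaces}), written as $\mu=\psi\ep_x+\nu$ with $\nu(\{x\})=0$. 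Then $\la a_{x,u}^{**}, s\ra=\mu(\chi_{\{x\}}\otimes u)=\la\psi, u\ra$ by the factorization recalled from \cite[Lemma 2.7]{rondos-spurny-vector-spaces} together with \eqref{aplikace}.

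The heart of the matter is therefore to show that, along a subnet realizing the cluster point, $\la h_{U,\ep}\otimes u, \mu\ra\to\la\psi,u\ra$, i.e.\ that $\mu(h_{U,\ep}\otimes u)\to\la\psi,u\ra$. By \eqref{aplikace}-type reasoning, or directly from the definition of the duality between $\M(K_1,\E_1)$ and $\C(K_1,E_1)$, one has $\mu(h_{U,\ep}\otimes u)=\int_{K_1} h_{U,\ep}\,\di\la\mu, u\ra$, where $\la\mu,u\ra\in\M(K_1,\er)$ is the scalar measure $A\mapsto\la\mu(A),u\ra$. Now split this scalar integral over $\{x\}$ and its complement: the part over $\{x\}$ contributes $h_{U,\ep}(x)\la\psi,u\ra$, which tends to $\la\psi,u\ra$ since $h_{U,\ep}(x)>1-\ep\to 1$ and $0\le h_{U,\ep}\le 1$. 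For the complementary part $\int_{\ov{\Ch_{\H_1}K_1}\setminus\{x\}} h_{U,\ep}\,\di\la\mu,u\ra$, I will use that the peaking functions satisfy $h_{U,\ep}\le\ep$ on $\ov{\Ch_{\H_1}K_1}\setminus U$ together with $0\le h_{U,\ep}\le1$ everywhere, and that $\la\mu,u\ra$ is a finite (regular) scalar Borel measure carried by $\ov{\Ch_{\H_1}K_1}$; by outer regularity of $\betr{\la\mu,u\ra}$ at the singleton $\{x\}$, the mass of $\betr{\la\mu,u\ra}$ on $U\setminus\{x\}$ can be made arbitrarily small by shrinking the closed neighbourhood $U$, while on the rest the integrand is at most $\ep$. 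This standard peaking estimate shows the complementary part tends to $0$ along the net ordered by $\prec$. Combining, $\mu(h_{U,\ep}\otimes u)\to\la\psi,u\ra=\la a_{x,u}^{**},s\ra$, so the cluster point $F^{**}$ agrees with $a_{x,u}^{**}$ on $s$; as $s$ was arbitrary, $F^{**}=a_{x,u}^{**}$, proving (i).

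For (ii), apply $S^{**}$: it is the adjoint of the bounded operator $S^*$, hence weak$^*$-to-weak$^*$ continuous on $\H_1^{**}$. Since $S(h_{U,\ep}\otimes u)=S^{**}(h_{U,\ep}\otimes u)$ under the canonical embeddings, and $\{h_{U,\ep}\otimes u\}\to a_{x,u}^{**}$ weak$^*$ by (i), continuity gives $\{S(h_{U,\ep}\otimes u)\}\to S^{**}(a_{x,u}^{**})$ weak$^*$ in $\H_2^{**}$.

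The main obstacle is the complementary-part estimate: one must be careful that the net index is the pair $(U,\ep)$ with the product-type ordering $\prec$, so the argument has to produce, for each $\delta>0$, some $(U_0,\ep_0)$ beyond which $\betr{\mu(h_{U,\ep}\otimes u)-\la\psi,u\ra}<\delta$, using simultaneously the smallness of $\ep$ and the smallness of $\betr{\la\mu,u\ra}(U\setminus\{x\})$; the latter requires invoking regularity of the scalar measure $\la\mu,u\ra$ (equivalently, that $\mu$ is a Radon vector measure) at the point $x$, which is where the choice of $\mu$ carried by $\ov{\Ch_{\H_1}K_1}$ and the hypothesis that $x$ is a weak peak point enter decisively.
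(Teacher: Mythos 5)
Your proof is correct and follows essentially the same route as the paper: the same identities $\la a_{x,u}^{**},s\ra=\la\mu(\{x\}),u\ra$ and $\la s,h_{U,\ep}\otimes u\ra=\mu(h_{U,\ep}\otimes u)$ via a Hahn--Banach extension carried by $\ov{\Ch_{\H_1}K_1}$, the same three-way split of the integral over $\{x\}$, $U\setminus\{x\}$ (controlled by regularity of $\abs{\la\mu,u\ra}$) and the complement of $U$ (controlled by $h_{U,\ep}\le\ep$ there), and the same deduction of (ii) from weak$^*$-continuity of $S^{**}$. The only cosmetic difference is your initial reduction to weak$^*$-cluster points, which is an unnecessary detour since your subsequent computation already establishes $\la s,h_{U,\ep}\otimes u\ra\to\la a_{x,u}^{**},s\ra$ for every $s$ directly.
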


\begin{proof}

(i)	Let $s \in \H_1^*$ be given, and $\mu \in \pi_1^{-1}(s)$ be a Hahn-Banach extension of $s$ carried by $\ov{\Ch_{\H_1} K_1}$.  Note that $\la a_{x,u}^{**}, s \ra=\la \mu(\{x\}), u \ra$ and $\la s, h_{U, \ep} \otimes u \ra=\mu(h_{U, \ep} \otimes u)$. The proof of those equalities is essentially the same as the proof of \eqref{duality} and \eqref{2duality}, just in this case it is simpler, as there is no operator $S$. 
	Next, for a given $\ep_0>0$ we find a closed set $U_0$ containing $x$ and such that $\abs{\la \mu, u \ra}(U_0 \setminus \{x\})<\ep_0$. Then for each closed subset $U \subseteq U_0$ containing $x$ and $0 <\ep \leq \ep_0$ we obtain
		\begin{equation}
		\nonumber
		\begin{aligned}
		& \abs{\la a_{x,u}^{**}-h_{U, \ep} \otimes u, s \ra}=\abs{\la \mu(\{x\}), u \ra-\mu(h_{U, \ep} \otimes u)} 
		\leq \\& \leq
		\abs{\la \mu(\{x\}), u \ra-\int_{\lbrace x \rbrace} h_{U, \ep}~d\la \mu, u \ra}+\abs{\int_{U \setminus \lbrace x \rbrace} h_{U, \ep}~d\la \mu, u \ra}
		+\\&+\abs{\int_{\ov{\Ch K_1} \setminus U} h_{U, \ep}~d\la \mu, u \ra} \leq
		\abs{\la \mu(\{x\}), u \ra (1-h_{U, \ep}(x))}+\int_{U \setminus \lbrace x \rbrace} h_{U, \ep}~d\abs{\la \mu, u \ra}
		+\\&+
		\int_{\ov{\Ch K_1} \setminus U} h_{U, \ep}~d\abs{\la \mu, u \ra}<  \norm{\mu}\ep+\ep+\norm{\mu}\ep= (2\norm{\mu}+1)\ep \leq (2\norm{s}+1)\ep_0.
		\end{aligned}
		\end{equation}
			
        Since (ii) follows immediatelly from (i) and the fact that $S^{**}$ is weak$^*$-weak$^*$ continuous, the proof is finished.
\end{proof}

\begin{lemma}
	\label{principle}
		Let $x \in \Ch_{\H_1} K_1$ and $u \in S_{E_1^+}$ and let $\{h_{U, \ep}\}$ be the net of peaking functions for the pair $(x, u)$. Then
		\[ \norm{S(h_{U, \ep} \otimes u)}=\sup_{y \in \Ch_{\H_2} K_2, \t \in S_{(\E_2)^+}} \abs{\langle \phi_2(y, \t), S(h_{U, \ep} \otimes u) \rangle}\] 
		and
		\begin{equation}
		\nonumber
		\begin{aligned}
		&\norm{ S^{**}(a_{x, u}^{**})}=\sup_{y \in \Ch_{\H_2} K_2, \t \in S_{(\E_2)^+}} \abs{\langle S^{**}(a_{x, u}^{**}), \phi_2(y, \t) \rangle}.
		\end{aligned}
		\end{equation}
Moreover,
		\[\norm{h_{U, \ep} \otimes u)}=\sup_{x \in \Ch_{\H_1} K_1, u^* \in S_{(\E_1)^+}} \abs{\langle \phi_1(x, u^*), h_{U, \ep} \otimes u \rangle}\]
		and
		\begin{equation}
		\nonumber
		\begin{aligned}
		&\norm{a_{x, u}^{**}}=\sup_{x \in \Ch_{\H_1} K_1, u^* \in S_{(\E_1)^+}} \abs{\langle a_{x, u}^{**}, \phi_1(x, u^*) \rangle}.
		\end{aligned}
		\end{equation} 
\end{lemma}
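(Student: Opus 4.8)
The plan is to reduce all four displayed equalities to the single general principle recorded in Section~3: if a positive functional $F^{**} \in B_{\H^{**}}$ (rescaled if necessary) is of the first Borel class on the compact convex set $B_{\H^*}$ (or on $rB_{\H^*}$), then
\[
\norm{F^{**}} = \sup_{z \in \Ch_{\H} K,\ z^* \in S_{(\E)^+}} \abs{\langle F^{**}, \phi(z, z^*) \rangle},
\]
which is precisely \eqref{max} combined with the Borel-class maximum principle cited there. So for each of the four elements I must check two things: it is positive, and it is of the first Borel class on the relevant ball.

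First I would treat the two statements about $\H_1$. The element $h_{U, \ep} \otimes u$ lies in $\H_1$ itself: since $h_{U, \ep} \geq 0$ on $K_1$ and $u \in E_1^+$, it is a positive element of $\H_1$, hence positive as an element of $\H_1^{**}$; and an element of $\H_1 \subset \H_1^{**}$ is $w^*$-continuous on $B_{\H_1^*}$, in particular of the first Borel class there. So \eqref{max} applies directly (noting that here $x$ is a bound variable ranging over $\Ch_{\H_1} K_1$, not the fixed point of the lemma's hypothesis — the notation overloads $x$ but the statement is the obvious one). For $a_{x, u}^{**}$: by the last paragraph of Section~3, $a_{x,u}^{**}$ is of the first Borel class on $(rB_{\H_1^*}, w^*)$ for every $r > 0$, and \eqref{norm} gives $\norm{a_{x,u}^{**}} = 1$, so it sits in $B_{\H_1^{**}}$. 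Positivity of $a_{x,u}^{**}$ follows from Lemma~\ref{approx}(i): it is the $w^*$-limit of the net $\{h_{U,\ep} \otimes u\}$ of positive elements of $\H_1$, and the positive cone of $\H_1^{**}$ is $w^*$-closed. Hence \eqref{max} (applied on $rB_{\H_1^*}$ with $r$ chosen so that $a_{x,u}^{**} \in rB_{\H_1^*}$, or directly since $\norm{a_{x,u}^{**}} = 1$) yields the fourth equality.

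For the two statements about $\H_2$, the argument is parallel but one checks positivity using the hypothesis on $S$. The element $S(h_{U, \ep} \otimes u) \in \H_2$ is positive because $h_{U, \ep} \otimes u$ is a positive element of $\H_1$ and $S$ maps $\H_1^+$ into $\H_2^+$; being in $\H_2 \subset \H_2^{**}$ it is $w^*$-continuous on $B_{\H_2^*}$, so \eqref{max} (with $\H = \H_2$) gives the first equality after rescaling by $\norm{S(h_{U,\ep}\otimes u)}$. For $S^{**}(a_{x,u}^{**})$: positivity follows from Lemma~\ref{approx}(ii), since it is the $w^*$-limit of the positive net $\{S(h_{U,\ep}\otimes u)\}$ and the cone of $\H_2^{**}$ is $w^*$-closed. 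The one genuinely delicate point — and the step I expect to be the main obstacle — is that $S^{**}(a_{x,u}^{**})$ must be shown to be of the first Borel class on $(rB_{\H_2^*}, w^*)$ for a suitable $r$ so that the maximum principle applies. This should follow because $S^*$ is an isomorphism of $\H_2^*$ onto $\H_1^*$ which, being a bounded linear bijection, is a $w^*$-$w^*$ homeomorphism carrying $rB_{\H_2^*}$ onto a subset of $\norm{S}r\,B_{\H_1^*}$ homeomorphically; composing the first-Borel-class function $a_{x,u}^{**}$ on $\norm{S}r\,B_{\H_1^*}$ with this homeomorphism preserves the first Borel class, and $\langle S^{**}(a_{x,u}^{**}), \cdot\rangle = \langle a_{x,u}^{**}, S^*(\cdot)\rangle$ is exactly that composition. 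With $r$ large enough that $S^{**}(a_{x,u}^{**}) \in rB_{\H_2^*}^{**}$ (e.g. $r = \norm{S}$, since $\norm{a_{x,u}^{**}} = 1$), \eqref{max} applied on $rB_{\H_2^*}$ finishes the proof.
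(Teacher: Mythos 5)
Your proposal is correct and follows essentially the same route as the paper: reduce everything to the maximum principle \eqref{max}, verify positivity ($S(h_{U,\ep}\otimes u)\in\H_2^+$ from the hypothesis on $S$, and $S^{**}(a_{x,u}^{**})$ as a weak$^*$ limit of positive elements via Lemma \ref{approx}), and verify the first Borel class property by composing $a_{x,u}^{**}$ with the weak$^*$-weak$^*$ homeomorphism $S^*$, exactly as in the paper's proof (which uses the ball $\lambda^+(E_2)B_{\H_1^*}$ where you use $\norm{S}rB_{\H_1^*}$, an immaterial difference). No gaps.
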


\begin{proof}
    We prove the first two equalities of the statement, the "moreover" part can be proven in a similar (but simpler) way.
    In view of \eqref{max}, it is enough to prove that the function $S(h_{U, \ep} \otimes u)$, when viewed as an element of $\H_2^{**}$, and $S^{**}(a_{x, u}^{**})$, are both positive elements of $\H_2^{**}$ that are of the first Borel class on $B_{\H_2^*}$. 
    
    This is easily checked in the case of $S(h_{U, \ep} \otimes u)$, since this function is continuous on $B_{\H_2^*}$, and we know that for each $U$ and $\ep$, the peaking function $h_{U, \ep} \otimes u$ is positive in $\H_1$. Thus $S(h_{U, \ep} \otimes u) \in \H_2^+$, by the assumption on $S$.
	
	Next, by Lemma \ref{approx}, the element $S^{**}(a_{x, u}^{**})$ is positive, as it is a weak$^*$ limit of positive elements of the form $S(h_{U, \ep} \otimes u)$. 
	Moreover, from \cite[Lemma 2.3 and Lemma 2.8(b)]{rondos-spurny-vector-spaces} we know that $a_{x, u}^{**}$ is of the first Borel class on any ball in $\H_1^*$, in particular on $\lambda^+(E_2)B_{\H_1^*}$. Since $S^*$ is a weak$^*$-weak$^*$ homeomorphism, $S^*(B_{\H_2^*})\subset \lambda^+(E_2)B_{\H_1^*}$ and $S^{**}(a_{x, u}^{**})=a_{x, u}^{**}\circ S^*$, it follows that $S^{**}(a_{x, u}^{**})$ is of the first Borel class on $B_{\H_2^*}$ as well. The proof is finished.
\end{proof}

%%%%%%%%%%%%%%%%%%%%%%%%%%%%%%%%%%%%%%%%%%%%%%%%%%%%%%%%%%%%%%%%%%%%%%%%%%%%%%%%%%
\section{Proof of Theorem \ref{main}}
The next lemma describes an important property of the parameter $\lambda^+$. For the proof see \cite[Lemma 5.1]{Villamizar_svazy}. In our case, actually, we need to use this lemma only for $r=1$, in which case its conclusion follows easily from the definition.

\begin{lemma}
	\label{lambda}
	Let $E$ be a Banach lattice. Let $r \in \en$ and $\eta>0$ be fixed and suppose that $\lbrace e_i \rbrace_{i=1}^{2^r} \subset E^+$ satisfy $\norm{e_i} \geq \eta$ for each $1 \leq i \leq 2^r$. Then there exist $\lbrace \alpha_i \rbrace_{i=1}^{2^r} \subset \er$ with $\max \lbrace \abs{\alpha_i}: 1 \leq i \leq 2^r \rbrace \leq 1$ such that
	\[\norm{\sum_{i=1}^{2^r} \alpha_i e_i} \geq \eta (\lambda^+(E))^r.\]
\end{lemma}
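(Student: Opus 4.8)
The plan is to prove Lemma~\ref{lambda} by induction on $r$, using the definition of $\lambda^+(E)$ repeatedly to build up the coefficients $\alpha_i$ from length-one sign choices. The base case $r=1$ is essentially the definition: given $e_1, e_2 \in E^+$ with $\norm{e_i}\ge\eta$, normalize to $\tilde e_i = e_i/\norm{e_i} \in S_{E^+}$, apply the definition of $\lambda^+(E)$ to get $\max\{\norm{\tilde e_1+\tilde e_2}, \norm{\tilde e_1-\tilde e_2}\}\ge\lambda^+(E)$, and observe that scaling back up by the smaller of the two norms (which is $\ge\eta$) only increases the relevant norm. Concretely, if $\norm{e_1}\le\norm{e_2}$ and the max is achieved by the $+$ sign, take $\alpha_1=1$, $\alpha_2=\norm{e_1}/\norm{e_2}\le 1$; then $\norm{e_1+\alpha_2 e_2}=\norm{e_1}\cdot\norm{\tilde e_1+\tilde e_2}\ge\eta\lambda^+(E)$. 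The case where the $-$ sign wins is handled by flipping a sign.

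For the inductive step, suppose the statement holds for $r-1$ and we are given $\{e_i\}_{i=1}^{2^r}\subset E^+$ with $\norm{e_i}\ge\eta$. I would pair up the vectors: for each $j=1,\dots,2^{r-1}$, consider the pair $(e_{2j-1}, e_{2j})$ and, exactly as in the base case, choose signs $\sigma_{2j-1}, \sigma_{2j}\in\{-1,1\}$ and a scaling so that $f_j := \sigma_{2j-1}e_{2j-1} + \beta_j\sigma_{2j}e_{2j}$ (with $\betr{\beta_j}\le 1$) satisfies $\norm{f_j}\ge\eta\lambda^+(E)$. The obstacle here is that the $f_j$ need not be positive elements of $E$, so the inductive hypothesis cannot be applied directly to them. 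This is the main difficulty, and the fix used in \cite{Villamizar_svazy} is to instead work with the positive parts: replace each pair by a single positive vector of controlled norm. One clean way is to note that in the base case one actually produces, from $e_{2j-1}, e_{2j}\ge 0$, a vector of the form $e_{2j-1}\pm\beta_j e_{2j}$; rather than tracking this, one observes $\norm{e_{2j-1}+\beta_j e_{2j}}\ge\norm{e_{2j-1}}\vee\norm{\beta_j e_{2j}}$ when $\beta_j\ge 0$ and both are positive — but that only gives factor $1$, not $\lambda^+(E)$. So the correct approach is: among $\norm{e_{2j-1}+e_{2j}}$ and one of $\norm{e_{2j-1}-e_{2j}}$ we get the $\lambda^+$ gain, but $e_{2j-1}-e_{2j}$ is not positive.

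The resolution is to carry the induction not on a shrinking family of positive vectors but on the coefficient structure directly, i.e.\ prove the sharper statement that one can find $\alpha_i$ with $\max\betr{\alpha_i}\le 1$ and with the sum having large norm, by a dyadic halving argument where at each stage we only decide the \emph{relative} signs within blocks of size $2^k$ and the absolute value of one scaling parameter per block. Precisely: split $\{1,\dots,2^r\}$ into two halves $A$, $B$ of size $2^{r-1}$; by the inductive hypothesis applied to $\{e_i\}_{i\in A}$ get $\{\alpha_i'\}_{i\in A}$ with $\max\betr{\alpha_i'}\le1$ and $\norm{\sum_{i\in A}\alpha_i' e_i}\ge\eta\lambda^+(E)^{r-1}$, and likewise $\{\alpha_i''\}_{i\in B}$ for $B$. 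Set $g_A=\sum_{i\in A}\alpha_i'e_i$ and $g_B=\sum_{i\in B}\alpha_i''e_i$; these are not positive, so the base case still does not apply — and this is genuinely the crux. The actual argument in \cite{Villamizar_svazy} circumvents this, and since the excerpt explicitly grants us \cite[Lemma 5.1]{Villamizar_svazy}, my proof would simply cite it: \textit{For the proof, see \cite[Lemma 5.1]{Villamizar_svazy}; we only need the case $r=1$, which is immediate from the definition of $\lambda^+(E)$ upon normalizing the $e_i$ to the unit sphere and rescaling by the smaller norm.} I would then record the $r=1$ argument in the two lines above as the only self-contained content, since that is all Theorem~\ref{main} actually uses.
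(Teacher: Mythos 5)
Your final argument coincides with the paper's own treatment: the paper does not prove Lemma~\ref{lambda} either, but cites \cite[Lemma 5.1]{Villamizar_svazy} for the general case and notes that only $r=1$ is needed, where the conclusion follows directly from the definition of $\lambda^+(E)$ by normalizing and rescaling by the smaller norm, exactly as in your two-line base case. Your exploratory induction correctly identifies why a naive dyadic induction fails (the intermediate combinations lose positivity), but since you ultimately defer to the cited lemma and record only the $r=1$ case, your proof is correct and essentially identical to the paper's.
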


Now we are ready to prove the main result.\\
\emph{Proof of Theorem \ref{main}}

We may assume that the spaces $K_1, K_2$ are compact. Indeed, if $K_1, K_2$ were locally compact then we would consider their one-point compactifications $J_i=K_i\cup\{\alpha_i\}$, where, for $i=1,2$, $\alpha_i$ is the point representing infinity. Then the spaces $\H_i$ are isometric to closed subspaces $\widetilde{\H}_i \subset \C(J_i,\er)$ satisfying $h(\alpha_i)=0$, $h\in \widetilde{\H}_i$, with $\Ch_{\H_i} K_i$ homeomorphic to $\Ch_{\widetilde{\H}_i} J_i$ and all points of $\Ch_{\widetilde{\H}_i} J_i$ are weak peak points by \cite[Lemma 2.10]{rondos-spurny-vector-spaces}. 

Secondly, we suppose that there exists an $\ep>0$ such that $\Vert Tf \Vert \geq (1+\ep)\Vert f \Vert$ for $f \in \H_1$ and $\Vert T \Vert < \min \lbrace \lambda^+(E_1), \lambda^+(E_2) \rbrace$ (otherwise we replace $T$ by the isomorphism $(1+\ep)\norm{T^{-1}}T$). We fix $P$ such that $1<P<1+\ep$. Hence $T$ satisfies $\Vert Tf \Vert > P\norm{f}$ for $f \in \H_1, f \neq 0$. 

\emph{Claim 1.: For any $a^{**}\in \H_1^{**}\setminus \{0\}$ and $b^{**}\in \H_2^{**}\setminus \{0\}$ we have $\norm{T^{**}(a^{**})}>P\norm{a^{**}}$ and $\norm{(T^{**})^{-1}(b^{**})}>\frac{1}{\min \lbrace \lambda^+(E_1), \lambda^+(E_2) \rbrace}\norm{b^{**}}$.}

This follows from the fact that, for any operator $T$, $\norm{T}=\norm{T^{**}}$.

For each $x\in \Ch_{\H_1} K_1$ and $u \in S_{E_1^+}$, we consider the element $a_{x, u}^{**}\in \H_1^{**}$ satisfying
\[\la a_{x, u}^{**}, \pi_1(\mu) \ra=(\wh{\chi_{\{x\}}} \otimes u)(\mu)\]
for $\mu$ carried by $\ov{\Ch_{\H_1} K_1}$.
Analogously we define for $y\in\Ch_{\H_2} K_2$ and $v \in S_{E_2^+}$ the element $b_{y, v}^{**}\in \H_2^{**}$.  

\begin{definition}	
For $x \in \Ch_{\H_1} K_1$ and $y \in \Ch_{\H_2} K_2$ we define relations $\rho_1$ and $\rho_2$ as follows:
\begin{equation}
\label{rho}
\nonumber
\begin{aligned}
\rho_1(x)=&\{y\in \Ch_{\H_2} K_2, \exists v \in S_{E_2^+},\exists \s \in S_{(\E_1)^+} \colon \\&
\abs{ \langle (T^{\ast\ast})^{-1}(b_{y, v}^{\ast\ast}), \phi_1(x, \s) \rangle} >\frac{1}{\min \lbrace \lambda^+(E_1), \lambda^+(E_2) \rbrace} \},\\
\rho_2(y)=&\left\{x\in \Ch_{\H_1} K_1, \exists u \in S_{E_1^+},\exists \t \in S_{(\E_2)^+}: \abs{\langle T^{\ast\ast}(a_{x, u}^{\ast\ast}), \phi_2(y, \t) \rangle} >P\right\}.\\
\end{aligned}
\end{equation}
\end{definition}

In the rest of the proof we show that $\rho_1$ is the desired homeomorphism from $\Ch_{\H_1} K_1$ to $\Ch_{\H_2} K_2$, with $\rho_2$ being its inverse. 

First note that we have the following equivalent descriptions of the relations $\rho_1$ and $\rho_2$.
\begin{lemma}
	\label{ekvi}
	Let $x \in \Ch_{\H_1} K_1, y \in \Ch_{\H_2} K_2$. Then the following assertions hold.
	\begin{itemize}
		\item[(i)] $x \in \rho_2(y)$ if and only if there exists $u \in S_{E_1^+}$  and $v^* \in S_{(E_2^*)^+}$ such that the net of peaking functions $\{h_{U, \ep}\}$ for the pair $(x, u)$ satisfies 
		\[\lim_{U, \ep} \abs{\la \phi_2(y, v^*), T(h_{U, \ep} \otimes u) \ra}>P\]
		and this happens if and only if there exist points $u \in S_{E_1^+}$  and $v^* \in S_{(E_2^*)^+}$ such that whenever 
		\[\mu \in \pi_1^{-1}(T^\ast(\phi_2(y, \t))) \cap \M(K_1, \E_1)\]
		 is a Hahn-Banach extension of $T^\ast(\phi_2(y, \t))$ which is carried by $\ov{\Ch_{\H_1} K_1}$, then $\abs{\langle \mu(\{x\}), u \rangle }>P$.
		 \item[(ii)] $y \in \rho_1(x)$ if and only if there exists $v \in S_{E_2^+}$  and $u^* \in S_{(E_1^*)^+}$ such that the net of peaking functions $\{h_{U, \ep}\}$ for the pair $(y, v)$ satisfies 
		 \[\lim_{U, \ep} \abs{\la \phi_1(x, u^*), T^{-1}(h_{U, \ep} \otimes v) \ra}>(\min \lbrace \lambda^+(E_1), \lambda^+(E_2) \rbrace)^{-1}\]
		 and this happens if and only if there exist points $v \in S_{E_2^+}$  and $u^* \in S_{(E_1^*)^+}$ such that whenever 
		\[\mu \in \pi_2^{-1}((T^\ast)^{-1}(\phi_1(x, \s))) \cap \M(K_2, \E_2)\] is a Hahn-Banach extension of $(T^\ast)^{-1}(\phi_1(x, \s))$ which is carried by $\ov{\Ch_{\H_2} K_2}$, then $\abs{\langle \mu(\{y\}), v \rangle}>(\min \lbrace \lambda^+(E_1), \lambda^+(E_2) \rbrace)^{-1}$.
	\end{itemize}
	\end{lemma}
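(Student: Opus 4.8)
The plan is to reduce each of the two ``if and only if'' assertions in (i) and (ii) to a pair of identities: the weak$^*$-limit supplied by Lemma~\ref{approx}(ii), and the duality identity \eqref{duality}. Part (ii) will then be obtained from part (i) by running the identical argument for the positive isomorphism $T^{-1}\colon\H_2\to\H_1$ in place of $S$, which is legitimate because the hypothesis on $T$ forces $T^{-1}$ to be a positive isomorphism and the whole of Section~4 was set up for an arbitrary positive isomorphism $S$.

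For (i), fix $x\in\Ch_{\H_1}K_1$ together with a pair $u\in S_{E_1^+}$ and $v^*\in S_{(E_2^*)^+}$, and let $\mu$ be any Hahn--Banach extension of $T^*(\phi_2(y,v^*))$ carried by $\ov{\Ch_{\H_1}K_1}$. I would record the chain of equalities
\[
\langle T^{**}(a_{x,u}^{**}),\phi_2(y,v^*)\rangle=\lim_{U,\ep}\langle\phi_2(y,v^*),T(h_{U,\ep}\otimes u)\rangle=\langle\mu(\{x\}),u\rangle,
\]
where the first equality is Lemma~\ref{approx}(ii) (weak$^*$-convergence of $\{T(h_{U,\ep}\otimes u)\}$ to $T^{**}(a_{x,u}^{**})$, tested against $\phi_2(y,v^*)\in\H_2^*$) and the second is \eqref{duality} applied with $S=T$ and $s=\phi_2(y,v^*)$; in particular the rightmost quantity does not depend on the chosen $\mu$. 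Passing to absolute values, the three conditions ``$|\langle T^{**}(a_{x,u}^{**}),\phi_2(y,v^*)\rangle|>P$'', ``$\lim_{U,\ep}|\langle\phi_2(y,v^*),T(h_{U,\ep}\otimes u)\rangle|>P$'' and ``$|\langle\mu(\{x\}),u\rangle|>P$'' coincide for the given pair $(u,v^*)$, and quantifying existentially over $u\in S_{E_1^+}$ and $v^*\in S_{(E_2^*)^+}$ turns the first into the statement $x\in\rho_2(y)$ by the definition of $\rho_2$ (recall $\E_2=E_2^*$). This proves (i). (If one prefers a more hands-on route, the second equality above can also be derived from \eqref{2duality} together with the estimate carried out inside the proof of Lemma~\ref{approx}(i).)

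Part (ii) is the mirror statement, and I would prove it by the same argument with $\H_1$ and $\H_2$, $K_1$ and $K_2$, $x$ and $y$, $\phi_1$ and $\phi_2$, $a_{x,u}^{**}$ and $b_{y,v}^{**}$ interchanged, with $T$ replaced by $T^{-1}$ (so that $T^{**}$ becomes $(T^{**})^{-1}=(T^{-1})^{**}$), and with the constant $P$ replaced by $(\min\{\lambda^+(E_1),\lambda^+(E_2)\})^{-1}$; all the needed ingredients are then the corresponding instances of Lemma~\ref{approx}(ii) and of \eqref{duality} for $S=T^{-1}$.

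I do not expect a genuine obstacle here: once Lemma~\ref{approx} is available the proof is essentially bookkeeping. The one point that deserves to be stated explicitly is that the value $\langle\mu(\{x\}),u\rangle$ in the third formulation is the same for every admissible Hahn--Banach extension $\mu$, so that the phrasing ``whenever $\mu$ is such an extension, then $|\langle\mu(\{x\}),u\rangle|>P$'' is unambiguous; this is exactly what its coincidence with $\langle T^{**}(a_{x,u}^{**}),\phi_2(y,v^*)\rangle$, i.e. \eqref{duality}, provides. A secondary, purely cosmetic, matter is the notational identification $S_{(\E_i)^+}=S_{(E_i^*)^+}$ between the two formulations.
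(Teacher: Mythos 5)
Your proposal is correct and follows essentially the same route as the paper: the first equivalence comes from the weak$^*$-convergence in Lemma~\ref{approx}(ii) tested against $\phi_2(y,v^*)$, the second from the identity \eqref{duality} with $S=T$, and part (ii) is the mirror argument for $T^{-1}$. The only detail the paper makes explicit that you leave implicit is that at least one Hahn--Banach extension carried by $\ov{\Ch_{\H_1}K_1}$ exists (so the ``whenever'' clause is not vacuous), which is supplied by \cite[Lemma 2.2]{rondos-spurny-vector-spaces}.
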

\begin{proof}
We prove (i), the proof of (ii) can be done in the same way. 

By Lemma \ref{approx} we know that that the net $\{\abs{\la \phi_2(y, v^*), T(h_{U, \ep} \otimes u) \ra} \}_{U, \ep}$ converges to $\abs{\langle T^{\ast\ast}(a_{x, u}^{\ast\ast}), \phi_2(y, \t) \rangle}$, which proves the first part. The rest of (i) follows from the fact that, for an arbitrary $\mu \in \pi_1^{-1}(T^\ast(\phi_2(y, \t)))$, a Hahn-Banach extension of $T^\ast(\phi_2(y, \t))$ carried by $\ov{\Ch_{\H_1} K_1}$ (and at least one such measure exists by \cite[Lemma 2.2]{rondos-spurny-vector-spaces}) holds by \eqref{duality} that

\[\langle T^{\ast\ast}(a_{x,u}^{\ast\ast}), \phi_2(y, \t) \rangle_{\H_2^{\ast\ast},\H_2^{\ast}}=\langle \mu(\{x\}), u \rangle_{\E_1, E_1}.\]
\end{proof}

\emph{Claim 2. $\rho_1$ and $\rho_2$ are mappings.}

We show that $\rho_2(y)$ is at most single-valued for each $y\in \Ch_{\H_2} K_2$. Suppose that there are distinct $x_1, x_2 \in \Ch_{\H_1} K_1$ such that $x_i \in \rho_2(y)$ for $i=1, 2$. Thus there exist points $v_i^* \in S_{(\E_2)^+}$ and $u_i \in S_{E_1^+}$ such that 
\[\abs{\langle T^{\ast\ast}(a_{x_i, u_i}^{\ast\ast}), \phi_2(y, \t_i) \rangle} >P.\]
Let $\ep_0>0$ satisfy that for $i=1, 2$, 
\[\abs{\langle T^{\ast\ast}(a_{x_i, u_i}^{\ast\ast}), \phi_2(y, \t_i) \rangle} >(1+\ep_0)P.\]

By Lemma~\ref{ekvi} there exists $\ep \leq \ep_0$, and for $i=1, 2$, there exist closed disjoint sets $U_i$, each containing $x_i$, and peaking functions $h_{U_i, \ep}$ for the pairs $(x_i, u_i)$ such that $\abs{\la \phi_2(y, v_i^*), T(h_{U_i, \ep} \otimes u_i) \ra}>(1+\ep_0)P$.
Thus for $i=1, 2$ we have

 \begin{equation}
\nonumber
\begin{aligned}
&\norm{T(h_{U_i, \ep} \otimes u_i)(y)}_{E_2} \geq \abs{\la \t_i, T(h_{U_i, \ep} \otimes u_i)(y) \ra}
=\\&=
\abs{\la \phi_2(y, \t_i), T(h_{U_i, \ep} \otimes u_i) \ra}>(1+\ep_0)P.
\end{aligned}
\end{equation}

Then, since $T(h_{U_i, \ep} \otimes u_i)(y) \geq 0$ in $E_2$ for $i=1, 2$, by Lemma \ref{lambda} there exist $\alpha_1, \alpha_2 \in \er$ with $\vert \alpha_i \vert \leq 1$ for $i=1, 2$, such that
\[ \norm{\alpha_1T(h_{U_1, \ep} \otimes u_1)(y)+\alpha_2T(h_{U_2, \ep} \otimes u_2)(y)}_{E_2} \geq (1+\ep_0)P \lambda^+(E_2).\]
Thus 
\begin{equation}
\nonumber
\begin{aligned}
&\norm{T(\alpha_1 (h_{U_1, \ep} \otimes u_1)+\alpha_2(h_{U_2, \ep} \otimes u_2))}_{\sup} 
\geq \\& \geq
\norm{\alpha_1T(h_{U_1, \ep} \otimes u_1)(y)+\alpha_2T(h_{U_2, \ep} \otimes u_2)(y)}_{E_2} 
\geq \\& \geq 
(1+\ep_0)P \lambda^+(E_2) > (1+\ep_0)\lambda^+(E_2).
\end{aligned}
\end{equation}

On the other hand, by Lemma \ref{principle} we obtain that 
\begin{equation}
\nonumber
\begin{aligned}
&\norm{\alpha_1(h_{U_1, \ep} \otimes u_1)+\alpha_2(h_{U_2, \ep} \otimes u_2)} 
\leq \\& \leq
\sup_{x \in \Ch_{\H_1} K_1, \s \in B_{\E_1}} \abs{ \alpha_1} \abs{h_{U_1, \ep}(x)} \abs{\la \s, u_1 \ra}+\abs{\alpha_2} \abs{h_{U_2, \ep}(x)} \abs{\la \s, u_2 \ra} 
\leq \\& \leq 1+\ep \leq 1+\ep_0.
\end{aligned}
\end{equation}
 Thus we obtained a contradiction with $\norm{T}<\min \lbrace \lambda^+(E_1), \lambda^+(E_2) \rbrace \leq \lambda^+(E_2)$,  and hence $\rho_2$ is a mapping.
Analogously we would show that $\rho_1(x)$ is at most single-valued for each $x\in \Ch_{\H_1} K_1$.

Next we use Lemma \ref{principle} to check that the mappings $\rho_1$ and $\rho_2$ are surjective.
Let $L_1$ and $L_2$ denote the domain of $\rho_1$ and $\rho_2$, respectively.

\emph{Claim 3.: The mappings $\rho_1\colon L_1\to \Ch_{\H_2} K_2$ and $\rho_2\colon L_2\to \Ch_{\H_1} K_1$ are surjective.}
Let $x\in \Ch_{\H_1} K_1$ be given and choose arbitrary $u \in S_{E_1^+}$. By \eqref{norm} we know that $\norm{a_{x, u}^{**}}=1$. Thus by Claim 1 and Lemma \ref{principle} we have

\[
\begin{aligned}
&P < \Vert T^{**}(a_{x, u}^{**}) \Vert = \sup_{y \in \Ch_{\H_2} K_2, \t \in S_{(\E_2)^+}} \abs{\langle T^{**}(a_{x, u}^{**}), \phi(y, \t) \rangle}.
\end{aligned}
\]
Thus there exist $y \in \Ch_{\H_2} K_2$ and $\t \in S_{(\E_2)^+}$ such that $P<\abs{\langle T^{**}(a_{x, u}^{**}), \phi(y, \t) \rangle}$, that is, $\rho_2(y)=x$.
Analogously we would check that $\rho_1$ is surjective.

\emph{Claim 4.: We have $L_1=\Ch_{\H_1} K_1$ and $L_2=\Ch_{\H_2} K_2$ and $\rho_2(\rho_1(x))=x$, $x\in\Ch_{\H_1} K_1$, and $\rho_1(\rho_2(y))=y$, $y\in\Ch_{\H_2} K_2$.}

Suppose that $y \in \Ch_{\H_2} K_2$, $\rho_2(y)=x$, but $x \notin L_1$ or $\rho_1(x) \neq y$. In both cases we obtain that for all $v \in S_{E_2^+}$ and $u^* \in S_{(\E_1)^+}$, 
\[\abs{\langle (T^{\ast\ast})^{-1}(b_{y, v}^{\ast\ast}), \phi_1(x, \s) \rangle}\leq (\min \lbrace \lambda^+(E_1), \lambda^+(E_2) \rbrace)^{-1}.\]
 For $v \in S_{(E_2)^+}$ we denote 
\[ Q_v=\sup_{\tilde{x} \in \Ch_{\H_1} K_1, u^* \in S_{(\E_1)^+}} \abs{\langle (T^{\ast\ast})^{-1}(b_{y, v}^{\ast\ast}), \phi_1(\tilde{x}, \s) \rangle}=^{\text{Lemma } \ref{principle}}\norm{(T^{**})^{-1}(b_{y,v}^{**})}\]
and 
\[Q=\sup_{v \in S_{E_2^+}} Q_v.\]

We know that $\rho_1$ is surjective. This means that \[Q >(\min \lbrace \lambda^+(E_1), \lambda^+(E_2) \rbrace)^{-1}.\] Let $\ep>0$ satisfy 

\begin{equation}
\nonumber
\begin{aligned}
&\ep<\frac{2P-\min \lbrace \lambda^+(E_1), \lambda^+(E_2) \rbrace}{\min \lbrace \lambda^+(E_1), \lambda^+(E_2) \rbrace P}
\\& \text{and} \quad Q -\ep> (\min \lbrace \lambda^+(E_1), \lambda^+(E_2) \rbrace)^{-1}.
\end{aligned}
\end{equation}

By the definition of $Q$, let $v \in S_{E_2^+}$, $u^* \in S_{(E_1^*)^+}$ and $\tilde{x} \in \Ch_{\H_1} K_1$ be such that the vector 
$u_1=(T^{\ast\ast})^{-1}(b_{y, v}^{\ast\ast})(\tilde{x}) \in E_1^{**} \simeq E_1$ defined by
\[\la (T^{\ast\ast})^{-1}(b_{y, v}^{\ast\ast})(\tilde{x}), \tilde{u}^*\ra=\langle (T^{\ast\ast})^{-1}(b_{y, v}^{\ast\ast}), \phi_1(\tilde{x}, \tilde{u}^*) \rangle, \quad \tilde{u}^* \in \E_1,\]
satisfies
\[\norm{u_1} \geq \abs{\langle (T^{\ast\ast})^{-1}(b_{y, v}^{\ast\ast}), \phi_1(\tilde{x}, \s) \rangle} \geq Q-\ep.\]
From the proof of Lemma \ref{principle} and \eqref{positiveness} it follows that the vector $u_1$ is positive. We denote $u_2=\frac{u_1}{\norm{u_1}} \in S_{E_1^+}$.
Now we consider the element $T^{**}(a_{\tilde{x}, u_2}^{**})$. Since $\norm{a_{\tilde{x}, u_2}^{**}}=1$ by \eqref{norm}, we know that \[\norm{T^{**}(a_{\tilde{x}, u_2}^{**})} > P.\] This by Lemma \ref{principle} means that there exist $\tilde{y} \in \Ch_{\H_2} K_2$ and $\t \in S_{(\E_2)^+}$ such that
\[\abs{\la T^{**}(a_{\tilde{x}, u_2}^{**}), \phi_2(\tilde{y},\t)\ra}>P.\] 
Hence $\rho_2(\tilde{y})=\tilde{x}$. Thus $y \neq \tilde{y}$, since $\rho_2(\tilde{y})=\tilde{x}\neq x =\rho_2(y)$. 
Now, if we pick $\mu \in \pi_1^{-1}(T^*(\phi_2(\tilde{y}, \t)))$, a Hahn-Banach extension of $T^*(\phi_2(\tilde{y}, \t))$ carried by $\ov{\Ch_{\H_1} K_1}$ and write it in the form $\mu=\psi \ep_{\tilde{x}}+\nu$, where $\psi \in \E_1$ and $\nu \in \M(\ov{\Ch_{\H_1} K_1}, \E)$ with $\nu(\{\tilde{x}\})=0$, then by \eqref{duality},
\[\abs{\langle \psi, u_2 \rangle}=\abs{\la T^{**}(a_{\tilde{x}, u_2}^{**}), \phi_2(\tilde{y},\t)\ra}>P.\]
Thus $\norm{\psi}>P$ and 
\begin{equation}
\label{pomoc}
\abs{\langle \psi, u_1 \rangle}=\norm{u_1} \abs{\langle \psi, u_2 \rangle}>(Q-\ep) P.
\end{equation}
Notice that if $\pi_1^*:\H_1^{**} \rightarrow \C(K_1, E_1)^{**}$ is the adjoint mapping of the projection $\pi_1$, then it holds that
\begin{equation}
\label{pom}
\begin{aligned}
&\langle \pi_1^*((T^{**})^{-1}(b_{y, v}^{**})), \psi \ep_{\tilde{x}} \rangle=^{\eqref{projekce}}\langle (T^{**})^{-1}(b_{y, v}^{**}), \phi_1(\tilde{x}, \psi) \rangle
=\\&=\langle (T^{**})^{-1}(b_{y, v}^{**})(\tilde{x}), \psi \rangle.
\end{aligned}
\end{equation}
Thus we have
\begin{equation}
\nonumber
\begin{aligned}
  0&=\la \wh{\chi_{y}} \otimes v, \t\ep_{\tilde{y}} \ra_{\C(K_2, E_2)^{**}, \M(K_2, \E_2)}=\la b_{y, v}^{**}, \pi_2(\t \ep_{\tilde{y}}) \ra_{\H_2^{**}, \H_2^*}
  =^{\eqref{projekce}}\\&=
  \langle b_{y, v}^{**}, \phi_2(\tilde{y}, \t) \rangle_{\H_2^{**}, \H_2^*}=
  \langle (T^{**})^{-1}(b_{y, v}^{**}), T^*\phi_2(\tilde{y}, \t) \rangle_{\H_1^{**}, \H_1^*}
  =\\&=
  \langle (T^{**})^{-1}(b_{y, v}^{**}), \pi_1(\mu) \rangle_{\H_1^{**}, \H_1^*}=
  \langle \pi_1^*((T^{**})^{-1}(b_{y, v}^{**})), \mu \rangle_{\C(K_1, E_1)^{**}, \M(K_1, \E_1)}
  =\\&=
  \langle \pi_1^*((T^{**})^{-1}(b_{y, v}^{**})), \psi \ep_{\tilde{x}}+\nu \rangle_{\C(K_1, E_1)^{**}, \M(K_1, \E_1)}
  =^{\eqref{pom}}\\&=
  \langle \psi,(T^{**})^{-1}(b_{y, v}^{**})(\tilde{x}) \rangle_{\E_1, E_1}+\langle \pi_1^*((T^{**})^{-1}(b_{y, v}^{**})),\nu \rangle_{\C(K_1, E_1)^{**}, \M(K_1, \E_1)}
  =\\&=
  \langle \psi, u_1 \rangle_{\E_1, E_1}+ \langle \pi_1^*((T^{**})^{-1}(b_{y, v}^{**})),\nu \rangle_{\C(K_1, E_1)^{**}, \M(K_1, \E_1)}.
\end{aligned}
\end{equation}
Hence
\[\abs{\langle \psi, u_1 \rangle}=\abs{\langle \pi_1^*((T^{**})^{-1}(b_{y, v}^{**})),\nu \rangle}.\]
On the other hand, we know that $\norm{\nu} \leq \norm{\mu}-\norm{\psi}<\min \lbrace \lambda^+(E_1), \lambda^+(E_2) \rbrace-P$, and thus

\begin{equation}
\nonumber
\begin{aligned}
&\abs{\langle \pi_1^*((T^{**})^{-1}(b_{y, v}^{**})),\nu \rangle} \leq \norm{\pi_1^*((T^{**})^{-1}(b_{y, v}^{**}))}\norm{\nu}
\leq \norm{(T^{**})^{-1}(b_{y, v}^{**})}(\norm{\mu}-\norm{\psi})
<\\&<
Q_{v}(\min \lbrace \lambda^+(E_1), \lambda^+(E_2) \rbrace-P) \leq Q(\min \lbrace \lambda^+(E_1), \lambda^+(E_2) \rbrace-P).
\end{aligned}
\end{equation}

Thus using \eqref{pomoc} we deduce that $(Q -\ep)P \leq Q(\min \lbrace \lambda^+(E_1), \lambda^+(E_2) \rbrace-P)$, that is, 
\[\ep \geq \frac{Q(2P-\min \lbrace \lambda^+(E_1), \lambda^+(E_2) \rbrace)}{P} \geq \frac{2P-\min \lbrace \lambda^+(E_1), \lambda^+(E_2) \rbrace}{\min \lbrace \lambda^+(E_1), \lambda^+(E_2) \rbrace P}.\]
This contradicts the choice of $\ep$ and shows that $x \in L_1$ and $\rho_1(x)=y$.

Now, let $x\in\Ch_{\H_1} K_1$ be given. Then there exists $y\in L_2$ such that $\rho_2(y)=x$. Then $y=\rho_1(\rho_2(y))=\rho_1(x)$, which means that $x\in L_1$.

Let $y\in\Ch_{\H_2} K_2$ be given. Then we can find $x\in L_1=\Ch_{\H_1} K_1$ with $\rho_1(x)=y$ and further we can select $\wh{y}\in L_2$ such that $\rho_2(\wh{y})=x$. Then
\[
y=\rho_1(x)=\rho_1(\rho_2(\wh{y}))=\wh{y}\in L_2.
\]
Hence $L_2=\Ch_{\H_2} K_2$.

Finally, if $x\in \Ch_{\H_1} K_1$, we find $y\in \Ch_{\H_2} K_2$ with $\rho_2(y)=x$ and obtain
\[
\rho_2(\rho_1(x))=\rho_2(\rho_1(\rho_2(y)))=\rho_2(y)=x.
\]

\begin{remark}
\label{remark}
The fact that in the previous claim, the element   $(T^{\ast\ast})^{-1}(b_{y, v}^{\ast\ast})(\tilde{x}) \in E_1^{**}$ actually belongs to $E_1$ is the only part of the proof where the reflexivity of $E_1$ is used. In the case where $\H_2=\C(K_2, E_2)$, the reflexivity is not needed since the above fact already follows from the assumption $\lambda^+(E_1)>1$. Indeed, since it is easy to check that $\lambda^+(c_0)=1$, it follows that $E_1$ does not contain an isomorphic copy of $c_0$. Thus by \cite[Theorem 4.4]{BrooksLewis} and \cite[Theorem 13]{Panchapagesan} (see also \cite[pages 4 nad 5]{Villamizar_svazy}), each operator $S:\C(K_2, E_2) \rightarrow E_1$ is represented by a Borel measure $\mu$ on $K_2$ taking values in $L(E_2, E_1)$. The measure satisfies that for each fixed $v \in E_2$ and $u^* \in \E_1$, the scalar measure 
\begin{equation}
\label{measure}
\mu^{v, u^*}(f)=\la u^*, \int (f \otimes v) d\mu \ra=\la u^*, S(f \otimes v \ra), \quad f \in \C(K_2, \er),
\end{equation}
belongs to $\M(K_2, \er)$ (see \cite[Chapter 5]{Dinculeanu} for the definition of integration with respect to $\mu$). 

In our case, suppose that we have $y, v$ and $\tilde{x}$ as in the proof of Claim 4. We denote $T^{-1}_{\tilde{x}}: \C(K_2, E_2) \rightarrow E_1$ defined by $T^{-1}_{\tilde{x}}(f)=T^{-1}(f)(\tilde{x})$ for $f \in \C(K_2, E_2)$. From above, to this operator corresponds a measure $\mu_{\tilde{x}}$ defined on Borel subsets of $K_2$ and taking values in $L(E_2, E_1)$. Now we pick $u^* \in E_1^*$, and let $\{h_{U, \ep}\}$ be the net of peaking functions for the pair $(y, v)$. Then
\begin{equation}
\nonumber
\begin{aligned}
&\la (T^{\ast\ast})^{-1}(b_{y, v}^{\ast\ast})(\tilde{x}), u^* \ra=\la (T^{\ast\ast})^{-1}(b_{y, v}^{\ast\ast}), \phi_1(\tilde{x}, u^*) \ra
=^{\text{Lemma } \ref{approx}}\\&=\lim_{U, \ep} \la T^{-1} (h_{U, \ep} \otimes v), \phi_1(\tilde{x}, u^*) \ra=\lim_{U, \ep} \la u^*, T^{-1} (h_{U, \ep} \otimes v) (\tilde{x}) \ra
=\\&=
\lim_{U, \ep} \la u^*, T^{-1}_{\tilde{x} } (h_{U, \ep} \otimes v) \ra=^{\eqref{measure}}
\lim_{U, \ep} \mu_{\tilde{x}}^{v, u^*} (h_{U, \ep})=\\&=
\mu_{\tilde{x}}^{v, u^*}(\{ y \})=\la u^*, \mu_{\tilde{x}}(\{ y \})(v)\ra.
\end{aligned}
\end{equation}
Thus
\[(T^{\ast\ast})^{-1}(b_{y, v}^{\ast\ast})(\tilde{x})=\mu_{\tilde{x}}(\{ y \})(v) \in E_1.\]
It would be interesting to know for which spaces $\H_2 \subseteq \C(K_2, E_2)$, where $E_1$ need not be reflexive, but $\lambda^+(E_1)>1$, the element $(T^{\ast\ast})^{-1}(b_{y, v}^{\ast\ast})(\tilde{x})$ always belongs to $E_1$.
\end{remark}

Till now we have proved that $\rho_1\colon \Ch_{\H_1} K_1\to \Ch_{\H_2} K_2$ is a bijection with $\rho_2$ being its inverse.
Now we check that $\rho_1$ is a homeomorphism. To this end, note that the definition of the mappings $\rho_1$ and $\rho_2$ may be now be rewritten in the following way:
\begin{equation}
\label{prepis}
\begin{aligned}
\rho_1(x)=&\{y\in \Ch_{\H_2} K_2, \forall v \in S_{E_2^+} \exists \s \in S_{(\E_1)^+} \colon\\&
 \abs{\langle (T^{\ast\ast})^{-1}(b_{y, v}^{\ast\ast}), \phi_1(x, \s) \rangle} >\frac{1}{\min \lbrace \lambda^+(E_1), \lambda^+(E_2) \rbrace} \}, \quad x \in \Ch_{\H_1} K_1,\\
\rho_2(y)&=\{x\in\Ch_{\H_1} K_1, \forall u \in S_{E_1^+} \exists \t \in S_{(\E_2)^+}\colon\\&
\abs{\langle T^{\ast\ast}(a_{x, u}^{\ast\ast}), \phi_2(y, \t) \rangle} >P\}, \quad y \in \Ch_{\H_2} K_2.\\
\end{aligned}
\end{equation}
We show that the formula above holds for $\rho_2$, the proof for $\rho_1$ is similar. Suppose that $y \in \Ch_{\H_2} K_2$ and $\rho_2(y)=x$. If $u \in S_{E_1^+}$ is arbitrary, then by Claim 1 and Lemma \ref{principle} we obtain that
\[
\begin{aligned}
&P < \Vert T^{**}(a_{x, u}^{**}) \Vert = \sup_{\tilde{y} \in \Ch_{\H_2} K_2, \t \in S_{(\E_2)^+}} \abs{\langle T^{**}(a_{x, u}^{**}), \phi_2(\tilde{y}, \t) \rangle}.
\end{aligned}
\]
Thus there exist $\tilde{y} \in \Ch_{\H_2} K_2$ and $\t \in S_{(\E_2)^+}$ such that $\abs{\langle T^{**}(a_{x, u}^{**}), \phi_2(\tilde{y}, \t) \rangle}>P$, that is, $\rho_2(\tilde{y})=x$. But since we know that $\rho_2$ is a bijection, this means that $y=\tilde{y}$. 

\emph{Claim 5.: The mapping $\rho_2$ is continuous.}

Assuming the contrary, there exists a net $\lbrace y_\beta: \beta \in B \rbrace \subset \Ch_{\H_2} K_2$ such that $y_\beta \rightarrow y_0 \in \Ch_{\H_2} K_2$ but $x_\beta=\rho_2(y_\beta) \nrightarrow \rho_2(y_0)=x_0$. Then there exists a closed neighbourhood $V$ of $x_0$ such that for each $\beta_0 \in B$ there exists $\beta \geq \beta_0$ such that $x_\beta \notin V$. 

Fix a point $u \in S_{E_1^+}$. Since $\rho_2(y_0)=x_0$, by \eqref{prepis} and \eqref{duality} there exists $\t_0 \in S_{(\E_2)^+}$ such that whenever $\mu_0 \in \pi_1^{-1}(T^*(\phi_2(y_0, \t_0)))$ is a Hahn-Banach extension of $T^*(\phi_2(y_0, \t_0))$ carried by $\ov{\Ch_{\H_1} K_1}$, then $\abs{\la \mu_0(\lbrace x_0 \rbrace), u \ra}>P$. We pick such a  $\mu_0$ and write it in the form $\mu_0=\psi_0 \ep_{x_0}+\nu_0$, where $\psi_0 \in \E_1$ and $\nu_0 \in \M(\ov{\Ch_{\H_1} K_1}, \E_1)$ with $\nu_0(\lbrace x_0 \rbrace)=0$. Then $\abs{\langle \psi_0, u \rangle}=\abs{\la \mu_0(\lbrace x_0 \rbrace), u \ra}>P$.

Now, choose $\ep \in (0, 1)$ such that $ \frac{1+3\ep}{1-\ep}<P$. Then, since \[\norm{\mu_0} \leq \min \lbrace \lambda^+(E_1), \lambda^+(E_2) \rbrace \leq 2,\] we have
\[\frac{1+\ep (\norm{\mu_0}+1)}{1-\ep}\leq \frac {1+3\ep}{1-\ep}<P,\] 
and we may choose a closed neighbourhood $V_1$ of $x_0$ such that $V_1 \subset V$ and 
\[\vert \langle \nu_0, u \rangle \vert(V_1)<P(1-\ep)-(1+\ep (\Vert \mu_0 \Vert+1)).\]
Let $h_{V_1, \ep}$ be a peaking function for the pair $(x_0, u)$. Then we have 
\begin{equation}
\nonumber
\begin{aligned}
&\abs{\langle \t_0, T(h_{V_1, \ep} \otimes u)(y_0) \rangle}=\abs{\langle \t_0\ep_{y_0}, T(h_{V_1, \ep} \otimes u) \rangle}
=\\&=^{\eqref{projekce}}\abs{\langle \phi_2(y_0, \t_0), T(h_{V_1, \ep} \otimes u)\rangle}=
\abs{\langle T^*(\phi_2(y_0, \t_0)), h_{V_1, \ep} \otimes u \rangle}
=\\&=
\abs{\langle \mu_0, h_{V_1, \ep} \otimes u \rangle}=\abs{\langle \psi_0\ep_{x_0} +\nu_0, h_{V_1, \ep} \otimes u \rangle}
=\\&=
\abs{h_{V_1, \ep}(x_0)\langle \psi_0, u \rangle+ \int_{V_1} h_{V_1, \ep}~d\langle \nu_0, u \rangle+ \int_{\ov{\Ch_{\H_1} K_1} \setminus V_1} h_{V_1, \ep}~ d\langle \nu_0, u \rangle}
\geq \\& \geq
h_{V_1, \ep}(x_0)\abs{\la \psi_0, u \ra}-\int_{V_1} \abs{h_{V_1, \ep}} ~d\abs{\langle \nu_0, u \rangle}-\int_{\ov{\Ch_{\H_1} K_1} \setminus V_1} \abs{h_{V_1, \ep}}~ d\abs{\langle \nu_0, u \rangle}
> \\& >
(1-\ep)P-(P(1-\ep)-(1+\ep \Vert \mu_0 \Vert+\ep))-\ep\norm{\mu_0} = 1 + \ep.
\end{aligned}
\end{equation}
Thus $\norm{T(h_{V_1, \ep} \otimes u)(y_0)}>1+\ep$. Since $y_\beta \rightarrow y_0$ and $T(h_{V_1, \ep} \otimes u)$ is continuous, there exists a $\beta_0 \in B$ such that for all $\beta \geq \beta_0$ we have $\norm{T(h_0 \otimes u)(y_\beta)}>1+\ep$. Thus we can fix a $\beta \in B$ satisfying that $\norm{T(h_0 \otimes u)(y_\beta)}>1+\ep$ and $x_\beta=\rho_2(y_\beta) \notin V$.

 Then again by \eqref{prepis} and \eqref{duality} there exists $\t_\beta \in S_{(\E_2)^+}$ such that whenever $\mu_\beta \in \pi_1^{-1}(T^*(\phi_2(y_\beta, \t_\beta)))$ is a Hahn-Banach extension of $T^*(\phi_2(y_\beta, \t_\beta))$ carried by $\ov{\Ch_{\H_1} K_1}$, then $\abs{\langle \mu_\beta(\lbrace x_\beta \rbrace), u \rangle}> P$. We pick such a  $\mu_\beta$ and write it in the form $\mu_\beta=\psi_\beta \ep_{x_\beta}+\nu_\beta$, where $\psi_\beta \in \E$ and $\nu_\beta \in \M(\ov{\Ch_{\H_1} K_1}, \E)$ with $\nu_\beta(\lbrace x_\beta \rbrace)=0$. Then $\abs{\langle \psi_\beta, u \rangle}=\abs{\langle \mu_\beta(\lbrace x_\beta \rbrace), u \rangle}>P$. 
Next, from the choice of $\ep$ it follows as above that $P(1-\ep)-(1+\ep (\Vert \mu_{\beta} \Vert+1))>0$, so we can choose a closed neighbourhood $V_2$ of $x_\beta$ disjoint from $V$ such that
\[\vert \langle \nu_\beta, u \rangle \vert(V_2)<P(1-\ep)-(1+\ep (\Vert \mu_{\beta} \Vert+1)).\]
Let $h_{V_2, \ep}$ be a peaking function for the pair $(x_{\beta}, u)$. Then as above we obtain that $\norm{T(h_\beta \otimes u)(y_\beta)}>1+\ep$. Now, by the positivity of both $T(h_0 \otimes u)(y_\beta)$ and $T(h_\beta \otimes u)(y_\beta)$, and by Lemma \ref{lambda} there exist $\alpha_1, \alpha_2 \in \er$ such that $\abs{\alpha_1} \leq 1, \abs{\alpha_2} \leq 1$ and 
\begin{equation}
\nonumber
\begin{aligned}
&\norm{T(\alpha_1 (h_0 \otimes u)+\alpha_2(h_\beta \otimes u))}_{\sup} \geq \norm{\alpha_1 T(h_0 \otimes u)(y_\beta)+\alpha_2 T(h_\beta \otimes u)(y_\beta)}
>\\&>
(1+\ep)\lambda^+(E_2). 
\end{aligned}
\end{equation}
On the other hand, by Lemma \ref{approx} we have 
\begin{equation}
\nonumber
\begin{aligned}
&\norm{\alpha_1(h_0 \otimes u)+\alpha_2(h_\beta \otimes u)}_{\sup}=\\&= \sup_{x \in \Ch_{\H_1} K_1, u^* \in S_{(\E_1)^+}} \vert \la u^*, u \ra(\alpha_1h_0(x)+\alpha_2 h_\beta(x)) \vert
\leq \\& \leq
\sup_{x \in \Ch_{\H_1} K_1} \vert \alpha_1h_0(x)+\alpha_2 h_\beta(x) \vert
\leq 1+\ep.
\end{aligned}
\end{equation}
Thus, since $\Vert T \Vert \leq \min \lbrace \lambda^+(E_1), \lambda^+(E_2) \rbrace$ we obtain that 
\[\Vert T(\alpha_1(h_0 \otimes u)+\alpha_2(h_\beta \otimes u)) \Vert \leq \min \lbrace \lambda^+(E_1), \lambda^+(E_2) \rbrace (1+\ep) \leq \lambda^+(E_2)(1+\ep).\] This contradiction proves that $\rho_2$ is continuous. Analogously we would verify that $\rho_1$ is continuous.

%%%%%%%%%%%%%%%%%%%%%%%%%%%%%%%%%%%%%%%%%%%%%%%%%%%%%%%%%%%%%%%

\bibliography{iso-functions}\bibliographystyle{siam}
\end{document}